\documentclass{article}
\usepackage{epsfig,amsmath,amsthm,amssymb,graphics,verbatim,amsfonts,subfigure,psfrag}

\usepackage{makeidx}

\setlength{\paperwidth}{20.0cm}\setlength{\paperheight}{26.0cm}\setlength{\hoffset}{-1in}
\setlength{\voffset}{-1in}\setlength{\textheight}{20.6cm}\setlength{\textwidth}{15.5cm}
\setlength{\marginparsep}{0.5cm}\setlength{\marginparwidth}{1cm}

\theoremstyle{plain}
\newtheorem{thm}{Theorem}[section]

\newtheorem{prop}[thm]{Proposition}

\theoremstyle{definition}

\def\R{\mathbb{R}}
\def\C{\mathbb{C}}

\def\I{\infty}

\newcommand{\be}{\begin{equation}}
\newcommand{\ee}{\end{equation}}
\newcommand{\benn}{\begin{equation*}}
\newcommand{\eenn}{\end{equation*}}
\newcommand{\bea}{\begin{eqnarray}}
\newcommand{\eea}{\end{eqnarray}}
\newcommand{\beann}{\begin{eqnarray*}}
\newcommand{\eeann}{\end{eqnarray*}}

\begin{document}
 
\date{}
\title{Scaling of Saddle-Node Bifurcations:\\ Degeneracies and Rapid Quantitative Changes}
\author{Christian Kuehn
\thanks{Center for Applied Mathematics, Cornell University, Ithaca, NY 14853}}

\maketitle

\begin{abstract}
The scaling of the time delay near a ``bottleneck'' of a generic saddle-node bifurcation is well-known to be given by an inverse square-root law. We extend the analysis to several non-generic cases for smooth vector fields. We proceed to investigate $C^0$ vector fields. Our main result is a new phenomenon in two-parameter families having a saddle-node bifurcation upon changing the first parameter. We find distinct scalings for different values of the second parameter ranging from power laws with exponents in $(0,1)$ to scalings given by $O(1)$. We illustrate this rapid quantitative change of the scaling law by a an overdamped pendulum with varying length. 
\end{abstract}

\section{Introduction}

Saddle-node bifurcations have been extensively studied in dynamical systems. The normal form in the context of ordinary differential equations is 
\be
\label{eq:sn}
\dot{x}=r+x^2 \qquad \text{ for $x \in\R$ and $r\in\R$} 
\ee
where $r$ is the bifurcation parameter. Solving for the fixed points  we set $r+x^2=0$, which has two solutions $x_\pm=\pm\sqrt{-r}$ giving no fixed points for $r>0$ and a non-hyperbolic fixed point $x_{\pm}=0$ at $r=0$. For $r<0$ we obtain an attracting fixed point $x_-$ and a repelling fixed point at $x_+$, hence a saddle-node bifurcation occurs at $r=0$. If we set $x(0)=x_0$ we can solve (\ref{eq:sn}) by separation of variables and obtain
\be
\label{eq:sov0}
\int_{x_0}^{x(t)}\frac{1}{r+s^2}ds= \int_0^t dt
\ee 
Using trigonometric substitution we get
\be
\label{eq:sov1}
\tan^{-1}\left(\frac{x(t)}{\sqrt{r}} \right) - \tan^{-1}\left(\frac{x_0}{\sqrt{r}} \right) = t\sqrt{r}
\ee    
Therefore the time a trajectory spends in the interval $I=[-1,1]$ is
\be
\label{eq:sov2}
\frac{2\tan^{-1} \left(  \frac{1}{\sqrt{r}} \right)}{\sqrt{r}} = t
\ee
Taking $r\rightarrow 0^+$ gives
\be
\label{eq:sov3}
t  \sim  \frac{1}{\sqrt{r}}
\ee
Note that any other interval $I=[-\epsilon,\epsilon]$ for some $\epsilon>0$ would yield the same result. In the literature this scaling law is referred to as \textit{intermittency}, \textit{``bottleneck''} or \textit{saddle-node ghost}. One of the earliest references is \cite{Pomeau} where saddle-node bifurcations of maps are investigated. Textbook references include \cite{GH} and \cite{Strogatz}. Applications of the scaling law to physical systems can, for example, be found in \cite{BulsaraSQUID,El-NasharSYNC,StrogatzWestervelt,Trickey,Sardanyes1,Sardanyes2}. We remark that the square-root scaling is found for any generic saddle-node bifurcation in sufficiently smooth vector fields independent of the dimension of the phase space. To justify this statement, recall the following theorem (see \cite{GH}):

\begin{thm}
\label{thm:gsn}
Consider $\dot{y}=g(\mu,y)$ with $\mu\in \R$ and $y\in \R^n$. Assume that for $\mu=\mu_0$ there exists an equilibrium $y_0$ such that:
\begin{enumerate}
	\item $D_yg(\mu_0,y_0)$ has a simple eigenvalue $0$ with right eigenvector $v$ and left eigenvector $w$.
	\item $D_yg(\mu_0,y_0)$ has $k$ eigenvalues of negative real part and $n-k-1$ eigenvalues with positive real part.
 	\item $w((\partial g/\partial \mu)(y_0,\mu_0))\neq 0$
	\item $w(D_y^2 g(\mu_0,y_0)(v,v))\neq 0$
\end{enumerate}
then there is a smooth curve of equilibria in $\R\times \R^n$ passing through $(\mu_0,y_0)$ tangent to the hyperplane $\{\mu_0\} \times \R^n$ with no equilibrium on one side of the hyperplane for each value $\mu$ and $2$ equilibria on the other side of the hyperplane for each $\mu$. The two equilibria are hyperbolic and have stable manifolds of dimensions $k$ and $k+1$ respectively.  
\end{thm}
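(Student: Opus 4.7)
\medskip
\noindent\textbf{Proof plan.} The overall strategy is a Lyapunov--Schmidt reduction (equivalently, a one-dimensional center manifold reduction) that turns the $n$-dimensional equilibrium problem $g(\mu,y)=0$ into a scalar bifurcation equation, after which the two non-degeneracy conditions (3) and (4) let one quote the implicit function theorem to extract the branch of equilibria in the normal form of (\ref{eq:sn}). Concretely, I would split $\R^n=\mathrm{span}(v)\oplus R$, where $R=\mathrm{Range}(D_y g(\mu_0,y_0))$, and let $P$ be the spectral projection onto $R$ along $\mathrm{span}(v)$, so $I-P=v\,w(\cdot)/w(v)$. Writing $y=y_0+\alpha v+z$ with $z\in R$, the system $g(\mu,y)=0$ decomposes into
\be
\label{eq:LSsplit}
P\,g(\mu,y_0+\alpha v+z)=0,\qquad (I-P)\,g(\mu,y_0+\alpha v+z)=0.
\ee

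The first equation in (\ref{eq:LSsplit}) is solvable for $z=z(\mu,\alpha)$ near $(\mu_0,0)$ by the implicit function theorem, since $D_z[Pg]|_{(\mu_0,y_0,0)}=P\,D_y g(\mu_0,y_0)$ is an isomorphism on $R$; here one also checks $z(\mu_0,0)=0$ and $\partial_\alpha z(\mu_0,0)=0$, which is the reason the leading Taylor coefficients of the reduced equation are computable purely from $g$ at $(\mu_0,y_0)$. Substituting back yields the scalar bifurcation equation $F(\mu,\alpha):=w\bigl(g(\mu,y_0+\alpha v+z(\mu,\alpha))\bigr)=0$, and a direct expansion gives $F(\mu_0,0)=0$, $\partial_\alpha F(\mu_0,0)=0$, while
\be
\label{eq:FTaylor}
\partial_\mu F(\mu_0,0)=w\bigl(\partial_\mu g(\mu_0,y_0)\bigr),\qquad \partial_\alpha^2 F(\mu_0,0)=w\bigl(D_y^2 g(\mu_0,y_0)(v,v)\bigr).
\ee
Conditions (3) and (4) say exactly that these two coefficients are nonzero, so the Morse lemma (or a further application of the implicit function theorem to $\partial_\alpha F=0$) shows $F(\mu,\alpha)=0$ defines a smooth curve $\mu=\mu^*(\alpha)$ with $\mu^*{}'(0)=0$ and $\mu^*{}''(0)\neq 0$. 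This produces the tangency to $\{\mu_0\}\times\R^n$ and, after a reparametrization, the local normal form $\dot\alpha=(\mu-\mu_0)c_1+\alpha^2 c_2+\mathcal{O}(3)$ with $c_1,c_2\neq 0$, which immediately gives zero or two equilibria depending on the sign of $(\mu-\mu_0)c_1/c_2$.

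For the hyperbolicity and dimension count of the stable manifolds, I would linearize $g$ along the curve of equilibria and argue by continuity of the spectrum. At $\mu=\mu_0$ the eigenvalues of $D_y g$ other than the simple zero are bounded away from the imaginary axis by hypothesis (2), so for $\mu$ close to $\mu_0$ the $k$ and $n-k-1$ groups persist with the same sign of real part. On the one-dimensional center direction, the linearization at each of the two branches is $\partial_\alpha F(\mu,\alpha_\pm(\mu))\approx \pm 2c_2\sqrt{(\mu_0-\mu)c_1/c_2}+\cdots$, which is nonzero and of opposite signs on the two branches; combined with the hyperbolic transverse spectrum, this gives stable manifolds of dimensions $k$ and $k+1$ respectively.

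The main technical obstacle I anticipate is bookkeeping the Lyapunov--Schmidt reduction cleanly enough that the identities in (\ref{eq:FTaylor}) drop out verbatim from conditions (3) and (4); one has to verify that the implicit solution $z(\mu,\alpha)$ contributes only at order $\alpha^2$ in $\mu$ and only at order $\alpha^3$ in $\alpha$, so that the displayed second derivative is genuinely the raw Hessian of $g$ and not polluted by the reduction. Everything else is routine continuity of eigenvalues and the implicit function theorem.
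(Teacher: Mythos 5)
The paper offers no proof of this statement: it is Sotomayor's saddle-node theorem, quoted as a known result from the cited reference \cite{GH}, so there is no internal argument to compare against. Your Lyapunov--Schmidt reduction is the standard proof of exactly this theorem and is correct as sketched --- the splitting along $\mathrm{span}(v)\oplus\mathrm{Range}(D_yg)$, the vanishing of $\partial_\alpha z(\mu_0,0)$, and the identification of the two leading Taylor coefficients of the reduced equation with conditions (3) and (4) all go through as you describe. The only step worth writing out more carefully is the last one: the small eigenvalue of the full Jacobian along each branch agrees with $\partial_\alpha F$ only up to a nonzero factor (e.g.\ via $\det D_yg = c\,\partial_\alpha F\,(1+o(1))$ with $c$ the product of the $n-1$ hyperbolic eigenvalues), because the center direction itself moves with $\mu$; but this affects nothing, since only the sign of that eigenvalue enters the dimension count $k$ versus $k+1$.
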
    

Furthermore the set of equations satisfying Theorem \ref{thm:gsn} is known to be open and dense in $C^\I$ one-parameter families with an equilibrium having a simple zero eigenvalue. Using a center manifold reduction \cite{Kusnetzov,GH} we can restrict to one-dimencsional dynamics and consider the equation 
\be
\dot{x}=f(r,x) \qquad \text{for $x \in\R$ and $r\in\R$}\nonumber
\ee
From now on we shall consider one-dimensional flows only. Necessary conditions for a saddle-node at $(r,x)=(0,0)$ are $f(0,0)=0=f_x(0,0)$. The genericity and transversality conditions in this case are:
\bea
\label{eq:gn1}
f_{xx}(0,0)\neq 0\\
\label{eq:gn2}
f_r(0,0)\neq 0
\eea
If the necessary conditions and \eqref{eq:gn1}-\eqref{eq:gn2} are satisfied we call a saddle-node non-degenerate. We want to show explicitly that the scaling law for a non-degenerate saddle-node is given by the normal form \eqref{eq:sn}. First recall one special case of the Malgrange Preparation Theorem \cite{NirenbergMPT,LuSing}:

\begin{thm}
\label{thm:mpt}
Suppose $f(r,x)$ is a real-analytic function on $\R^2$ and
\benn
f(0,0)=0=\frac{\partial f}{\partial r}(0,0)=\ldots=\frac{\partial^{m-1} f}{\partial r^{m-1}}(0,0) \quad \text{and} \quad \frac{\partial f^m}{\partial r^m}(0,0)\neq 0.
\eenn 
Then there exists a smooth function $c(r,x)$ which is nonzero near the origin such that
\benn
f(r,x)=c(r,x)(r^m+a_{m-1}(x)r^{m-1}+\ldots+a_0(x))
\eenn
\end{thm} 

Applying the Malgrange Preparation Theorem to a non-degenerate saddle-node we find that it is locally given by
\benn
\dot{x}=c(r,x)(r+a(x))
\eenn
where $c(r,x)\neq 0$ for some neighbourhood $V=[-\epsilon,\epsilon]^2$. Also $a(x)$ satisfies $a(0)=0=a'(0)$ and $a''(0)\neq 0$. The same technique as used in \eqref{eq:sov0}-\eqref{eq:sov3} implies that the scaling law can be given calculating
\benn
\int_{-\epsilon}^{\epsilon}\frac{1}{c(r,x)(r+c_2x^2+O(x^3))}dx
\eenn 
Since $c(r,x)$ is bounded and nonzero on $[-\epsilon,\epsilon]$ we have that
\benn
\int_{-\epsilon}^{\epsilon}\frac{1}{c(r,x)(r+c_2x^2+O(x^3))}dx\sim \int_{-\epsilon}^{\epsilon}\frac{1}{r+c_2x^2+O(x^3)}dx\sim\int_{-\epsilon}^{\epsilon}\frac{1}{r+x^2}dx
\eenn   
as $r\rightarrow 0^+$. In the next section we briefly address the question, what happens to the scaling law when the saddle-node is degenerate.

\section{Degenerate Saddle-Nodes}
We treat each of the cases for degenerate saddle-nodes in turn. We shall assume from now on that the vector fields under consideration are analytic.

\subsection{Case 1 - $f_{xx}(0,0)= 0$}

Again using the Malgrange Preparation Theorem we can reduce to the case:
\be
\dot{x}=r+F(x) \qquad \text{for $x \in\R$ and $r\in\R$} \nonumber
\ee
with $F\in C^\omega(\R)$ and $F_{xx}(0,0)=0$. Notice that we still require that $F(0)=0=F_x(0)$ as necessary conditions for a saddle-node at $(r,x)=(0,0)$. Also we assume without loss of generality that $F(x)>0$ for $x\neq 0$ near $0$. Considering the Taylor expansion of $F$ at $0$ we get $F(x)=c_{2k}x^{2k}+O(x^{2k+1})$ for some $k>1$ as $F_{xx}(0)=0$; note that we can disregard the odd terms since if the leading term in the Taylor expansion is odd then we do not have a topological saddle-node. Since $F$ is analytic it has a a lowest order non-zero coefficient $c_{2k}\neq 0$ in its Taylor expansion. This excludes the ``completely flat'' case when all Taylor coefficients vanish. To find the scaling law for the time a trajectory spends in $[-1,1]$ we have to evaluate the integral:
\be
\int_{-1}^{1} \frac{1}{r+c_{2k}x^{2k} +O(x^{2k+1})} dx \sim \int_{-1}^{1} \frac{1}{r+x^{2k}} dx \qquad \text{for $r\rightarrow 0^+$} \nonumber
\ee
Since the asymptotic behavior of the time spend in $[-1,1]$ is independent of the interval of non-zero length centered at $0$ we can find the scaling law by solving the integration problem:
\be
\int_{-\I}^{\I} \frac{1}{r+x^{2k}}dx \qquad \text{for $k\in\{2,3,4,\ldots\}$} \nonumber
\ee
To evaluate the last integral we either use substitution twice or use contour integration over the contour in $\C$  given by the semi-circle with radius $a$ and the interval $[-a,a]$; see e.g. \cite{Fontich} where this calculation is carried out in detail. In any case, we get
\be
\int_{-\I}^{\I} \frac{1}{r+x^{2k}}dx \sim r^{-\frac{2k-1}{2k}} \qquad \text{as $r\rightarrow 0^+$} \nonumber
\ee
The discussion can be summarized in the following result:

\begin{prop}
\label{prop:dgprop2}
Consider the ODE
\be
\label{eq:dg2}
\dot{x}=r+F(x) \qquad \text{with $F(0)=0=F_x(0,0)=F_{xx}(0)$ and $F(x)>0$ for $x\neq0$}
\ee
Assume that $F$ is real analytic. Define $m$ to be the smallest exponent with nonzero coefficient in the Taylor expansion for $F$ at $0$, then (\ref{eq:dg2}) has a saddle-node bifurcation at $(r,x)=(0,0)$ with scaling law given by
\be
t\sim r^{-\frac{m-1}{m}} \nonumber
\ee
\end{prop}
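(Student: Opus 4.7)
The plan is to make rigorous the three-line computation that precedes the statement: (i) verify the bifurcation, (ii) isolate the leading monomial of $F$ inside the transit-time integral, and (iii) rescale to extract the exponent.

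First I would confirm the bifurcation structure. Since $F$ is real analytic with $F(0)=F_x(0)=F_{xx}(0)=0$ and $F(x)>0$ for $x\ne 0$, the leading nonzero Taylor coefficient $c_m$ must be positive and $m$ must be even, for otherwise $c_m x^m$ would change sign near $0$, contradicting positivity; hence $m\in\{4,6,8,\dots\}$. Consequently $r+F(x)=0$ has no root for $r>0$ small, two small roots for $r<0$, and they collide non-hyperbolically at $x=0$ when $r=0$, so $(r,x)=(0,0)$ is a saddle-node. Separation of variables gives the transit time through $[-1,1]$:
\[
T(r)=\int_{-1}^{1}\frac{dx}{r+F(x)}.
\]

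Next I would split the integral. Choose $\varepsilon>0$ small enough that on $[-\varepsilon,\varepsilon]$ one has the factorization $F(x)=c_m x^m(1+h(x))$ with $h$ analytic, $h(0)=0$, and $|h(x)|\le 1/2$. Outside $[-\varepsilon,\varepsilon]$ one has $F\ge\delta>0$, so that piece of the integral is $O(1)$ and subleading compared with the divergent scaling we seek. The main contribution is
\[
T_0(r)=\int_{-\varepsilon}^{\varepsilon}\frac{dx}{r+c_m x^m(1+h(x))}.
\]

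Then I would rescale. The substitution $x=(r/c_m)^{1/m}y$ converts $T_0$ into
\[
r^{-(m-1)/m}\,c_m^{-1/m}\int_{-L(r)}^{L(r)}\frac{dy}{1+y^m\bigl(1+h((r/c_m)^{1/m}y)\bigr)},
\]
with $L(r)=\varepsilon(c_m/r)^{1/m}\to\infty$ as $r\to 0^+$. Since $m$ is even and $|h|\le 1/2$, the integrand is pointwise dominated by $(1+y^m/2)^{-1}$, which is integrable on $\R$, and converges pointwise to $(1+y^m)^{-1}$. Dominated convergence then yields $T(r)\sim C\, r^{-(m-1)/m}$ with $C=c_m^{-1/m}\int_{-\infty}^{\infty}(1+y^m)^{-1}dy>0$, which is the claimed scaling.

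The main obstacle is this last step: one must show that the analytic remainder $h$ does not spoil the leading asymptotic even though the rescaled integration window blows up. The fixed-window estimate $|h|\le 1/2$ on $[-\varepsilon,\varepsilon]$ is preserved under the substitution and provides the uniform domination needed; without positivity of $c_m$ and parity of $m$ the dominating function would fail to be integrable, which is why those facts were established up front.
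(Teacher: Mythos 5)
Your proposal is correct and follows essentially the same route as the paper: reduce to the leading Taylor monomial and read off the scaling from the model integral $\int (r+x^m)^{-1}\,dx \sim r^{-(m-1)/m}$. The only difference is one of rigor — the paper simply asserts that the transit-time integral is asymptotic to the one with $F$ replaced by $c_m x^m$ and then evaluates that model integral by substitution or contour integration, whereas your factorization $F(x)=c_m x^m(1+h(x))$, rescaling, and dominated-convergence step justify that replacement and produce the constant explicitly.
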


\subsection{Case 2 - $f_r(0,0)= 0$}
The transversality condition for $r$ at $r=0$ fails and we can again apply the Malgrange Preparation Theorem to simplify the situation. Note that here the variables $(r,x)$ must be interchanged in the statement of Theorem \ref{thm:mpt} to get that
\be
\dot{x}=a_0(r)+a_1(r)x+x^2 \qquad \text{for $x \in\R$, $r\in\R$ and $R\in C^\omega(\R)$} \nonumber
\ee 
By a change of variable $x\mapsto x-a_1(r)/2$ we can further reduce this to 
\be
\label{eq:ntsn}
\dot{x}=R(r)+x^2 \qquad \text{for $x \in\R$, $r\in\R$ and $R\in C^\omega(\R)$}
\ee 
without restrictions on the first derivative of $R$ and $R(0)=0$. Therefore $R(r)$ has a Taylor expansion given by:
\be
\label{eq:TeR}
R(r)=d_kr^k+d_{k+1}r^{k+1}+\ldots
\ee
where $d_k\neq 0$ and $k>1$. In particular we can now show:

\begin{prop}
\label{prop:rsmooth}
Consider the differential equation \eqref{eq:ntsn},\eqref{eq:TeR} of a non-transversal saddle-node  Then the scaling law is given by 
\benn
t\sim \frac{1}{r^{k/2}}
\eenn 
\end{prop}

\begin{proof}
Note that $r$ is treated as a constant in the integral calculation \eqref{eq:sov0}-\eqref{eq:sov3}. Hence the scaling law is $t\sim (R(r))^{-1/2}$ for $r\rightarrow 0^+$. The lowest order term in the Taylor expansion for $R$ determines the asymptotics as $r\rightarrow 0^+$ and the result follows. 
\end{proof}

Remark: Note that we have included topologically degenerate cases such as $\dot{x}=r^2+x^2$ in Proposition \ref{prop:rsmooth}. Disregarding these cases excludes all scaling laws with even powers $k=2m$. 

\subsection{Further Remarks}

Although degenerate saddle-nodes are not dense in the space smooth vector fields they still might be observed in practical applications, e.g. due to symmetry inherent in the system or given nonlinear parameter dependencies. Propositions \ref{prop:dgprop2} and \ref{prop:rsmooth} show that we should expect various power laws as scalings near a (degenerate) saddle-node bifurcation for a smooth vector field. The natural question arises what happens if we drop the smoothness requirements of our vector field. In particular we consider the case when the vector field is only continuous at $(r,x)=(0,0)$. 

\section{Non-Smooth Saddle-Nodes}

We define a saddle-node bifurcation for $C^0$ vector fields by requiring orbital topological equivalence to the smooth saddle-node case. We remark that the analytical description presented in Theorem \ref{thm:gsn} no longer applies and refer to \cite{PSDS} and \cite{LeineNijmeijer} (and references therein) for analytic methods in the theory of non-smooth bifurcations. Suppose we drop the smoothness requirement on the parameter dependence and allow $R(r)\in C^0(\R)$, then we can show:

\begin{prop}
\label{prop:dgprop1}
Let $a:(0,\I)\rightarrow \R^+$, $a(r)\in C^\I((0,\I))$ and $a(r)\rightarrow \I$ as $r\rightarrow 0^+$. Then there exists a function $R(r)$, $R\in C^0(\R)$, such that the equation $\dot{x}=R(r)+x^2$ has a saddle-node bifurcation at $r=0$ with no equilibria for $r>0$ and two equilibria for $r<0$. The scaling law for this saddle-node is given by $t\sim a(r)$ as $r\rightarrow 0^+$. 
\end{prop}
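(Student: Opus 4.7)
The plan is to mimic the construction used in Proposition \ref{prop:rsmooth}, inverting the time integral to read off the required $R$. Specifically, recall that for $\dot{x}=k+x^2$ with $k>0$ the time to cross $[-1,1]$ is exactly $\frac{2}{\sqrt{k}}\tan^{-1}(1/\sqrt{k})$. So if for each fixed $r>0$ I can arrange $R(r)=1/a(r)^2$, then the traversal time at parameter $r$ is
\be
t(r)=\frac{2\tan^{-1}(1/\sqrt{R(r)})}{\sqrt{R(r)}}=2a(r)\tan^{-1}(a(r)),\nonumber
\ee
and since $a(r)\to\I$ as $r\to 0^+$, we have $\tan^{-1}(a(r))\to \pi/2$, giving $t(r)\sim \pi\,a(r)$, i.e. $t\sim a(r)$.

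Next I need to extend $R$ to a continuous function on all of $\R$ and in such a way that the fixed-point picture is the desired saddle-node. First I define $R(r):=1/a(r)^2$ for $r>0$. Since $a\in C^\I((0,\I))$ and $a>0$, this is smooth on $(0,\I)$. I set $R(0):=0$; because $a(r)\to\I$ as $r\to 0^+$, $R(r)\to 0=R(0)$, so $R$ is continuous from the right at $0$. For $r<0$ I define $R(r):=-r^2$ (any continuous strictly negative extension will do). Then $R(r)\to 0$ as $r\to 0^-$, so $R\in C^0(\R)$ globally.

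It remains to check the bifurcation structure of $\dot{x}=R(r)+x^2$. For $r>0$, $R(r)>0$, so $R(r)+x^2>0$ and there are no equilibria. For $r=0$, the unique equilibrium is the non-hyperbolic point $x=0$. For $r<0$ we have $R(r)=-r^2<0$, hence two equilibria $x_\pm=\pm|r|$, which are hyperbolic (one attracting, one repelling); this is exactly the topological picture of a saddle-node. The scaling law for $r\to 0^+$ then follows from the computation above, since $r$ was arbitrary in $(0,\I)$, yielding $t\sim a(r)$.

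The only mildly delicate point is the continuity of $R$ at $r=0$, but this is handed to us for free by the hypothesis $a(r)\to\I$. Everything else reduces to the explicit integration already carried out in Proposition \ref{prop:rsmooth}, and the argument does \emph{not} require any differentiability of $R$ at the origin — which is exactly why we can realize arbitrary (smooth, unbounded) scalings $a$, unlike in the $C^\I$ setting where $R$ must vanish algebraically and thus restricts $a$ to integer power laws.
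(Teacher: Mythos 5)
Your proof is correct and follows essentially the same route as the paper: set $R(r)=1/a(r)^2$ for $r>0$ and read off $t=2a(r)\tan^{-1}(a(r))\sim a(r)$ from the explicit integral already computed in Proposition \ref{prop:rsmooth}, continuity at $0$ coming from $a(r)\rightarrow\I$. One point in your favour: the paper's proof literally imports the \emph{even} extension from Proposition \ref{prop:rsmooth}, which would make $R(r)>0$ for $r<0$ and hence give \emph{no} equilibria there, contradicting the stated ``two equilibria for $r<0$''; your choice of a strictly negative continuous extension (e.g.\ $R(r)=-r^2$ for $r<0$) is exactly the small fix needed to realize the claimed bifurcation structure.
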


\begin{proof}
Define $R(r)=1/a(r)^2$ where we use the even extension for $1/a(r)^2$ to define $R(r)$ on the entire real line. Note that $R(0)=0$ and $R\in C^0(\R)$. Using the same separation of variables argument as in (\ref{eq:sov0})-(\ref{eq:sov3}) it follows that the scaling law is $t\sim a(r)$ as $r\rightarrow 0^+$.
\end{proof}

Proposition \ref{prop:dgprop1} implies that a wide variety of scaling laws can occur if the parameter dependence on $r$ is non-linear and only $C^0$. This is in contrast to the fact that in the smooth case we expect particular power laws. For the rest of this paper we shall focus on the equation:
\be
\dot{x}=r+F(x) \qquad \text{for $x \in\R$ and $r\in\R$} \nonumber
\ee
with $F\in C^\I(\R-\{0\})$ and $F\in C^0(\R)$. We shall show that this case precisely gives an interesting ``intermediate'' behavior between the cases considered so far. We introduce the notation:
\be
\label{eq:nseq}
\dot{x}=\left\{
\begin{array}{r l}
r+f_-(x) & \text{for $x<0$} \\
r+f_+(x) & \text{for $x\geq0$}               
\end{array}
\right.
\ee
with $f_\pm(0)=0$, $f_\pm(x)>0$ for $x\neq0$, $f_-\in C^\I((-\I,0))$ and $f_+\in C^\I((0,\I))$. Observe that (\ref{eq:nseq}) has a saddle-node bifurcation at $(r,x)=(0,0)$. If we use the same techniques to investigate the scaling law for $r\rightarrow 0^+$ as in (\ref{eq:sov0})-(\ref{eq:sov3}) we obtain two integrals:
\be
A_-(r)=\int_{-1}^0 \frac{1}{r+f_-(x)}dx \qquad \text{and} \qquad A_+(r)=\int_0^1 \frac{1}{r+f_+(x)}dx \nonumber
\ee 
Observe that if $f_-(x)\ll f_+(x)$ for $x\rightarrow 0$ then $A_-(r)\ll A_+(r)$ for $r\rightarrow 0^+$. So we can assume that $f_-(x)\sim f_+(x)$ as $x\rightarrow 0$. In particular we assume without loss of generality (with respect to investigating the scaling law) that $f_+(x)$ is given and $f_-(x)$ is its even extension to $(-\I,0)$. For simplicity of notation we shall simply denote $f_+(x)=f(x)$ for $x\geq 0$ and $f_-(x)=f(x)$ for $x<0$, drop the subscripts $\pm$.

\subsection{Three Examples}
We compare three key examples of ODEs $\dot{x}=r+f_i(x)$ $(i=1,2,3)$ leading to further investigation. They are given by
\bea
\label{eq:3ex}
f_1(x)=\sqrt{x} \qquad  f_2(x)=x \qquad f_3(x)=x^2 \qquad \text{for $x\geq 0$}
\eea
where we use even extensions to define the functions $f_i$ on $\R$. The examples are illustrated in Figure \ref{fig:figure1}.

\begin{figure}[htbp]
	\centering
		\includegraphics[width=1.00\textwidth]{./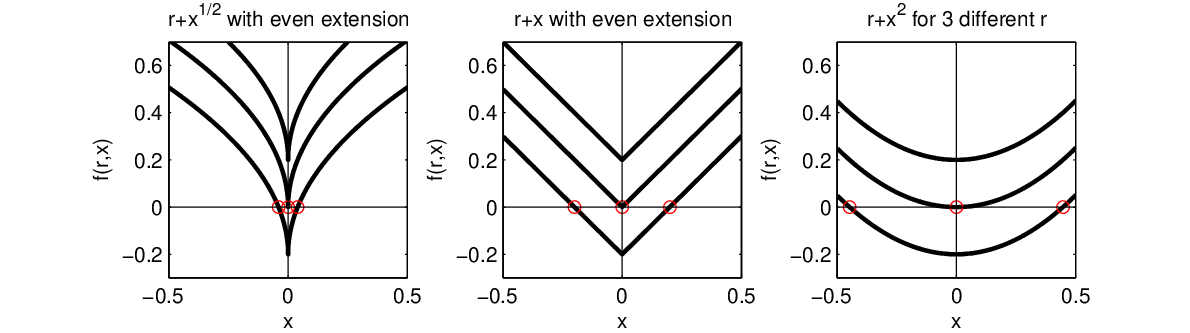}
	\caption{$f(x,r)=r+x^a$ for $r=-\frac15,0,\frac15$ and $a=\frac12,1,2$}
	\label{fig:figure1}
\end{figure}

Integrals giving the scaling laws are:
\be
A_i(r)=\int_0^1\frac{1}{r+f_i(x)}dx \qquad \text{for $i=1,2,3$} \nonumber
\ee
The integrals can be evaluated explicitly and the results are:
\be
A_1(r)= 2+2r\ln(r)-2r\ln(1+r) \qquad A_2(r)=ln\left(1+\frac1r \right) \qquad A_3(r)=\frac{\tan^{-1}\left(\frac{1}{\sqrt{r}}\right)}{\sqrt{r}}\nonumber
\ee
We consider the limit $r\rightarrow 0^+$ to get the scaling laws. The results are summarized in Table \ref{tab:3ex}.\\

\begin{table}[htbp]
	\caption{Scaling laws for three examples}
	\centering
	\label{tab:3ex}
	\begin{tabular}{|l|l|l|}
	\hline
	Function & Scaling Law & Order of the Scaling \\
	\hline\hline
	$f_1(x)=\sqrt{x}$ & $t=2$                      & constant  \\
	$f_2(x)=x$        & $t\sim\ln(r)$              & logarithmic  \\
	$f_3(x)=x^2$      & $t\sim \frac{1}{\sqrt{r}}$ & square-root  \\
	\hline
	\end{tabular}
\end{table}

Therefore we should pose the question, what exponents $\alpha$ for the family $f_\alpha(r,x)=r+x^\alpha$ have scaling laws $O(1)$.

\subsection{Rapid Changes of the Scaling Law}

\begin{thm}
\label{thm:qb}
For each $r$, define $f_\alpha(r,x)$ as the even extension of 
\be
\label{eq:family}
x\mapsto r+x^\alpha \qquad x\geq0 \nonumber
\ee
Consider the two-parameter family of differential equations given by
\be
\dot{x}=f_\alpha(r,x) \qquad \text{for $x\in \R$ and $r\in \R$} \nonumber
\ee
then we find the scaling law for $\alpha\in(0,1)$ to be given by $t=O(1)$, whereas $t\rightarrow\I$ for $\alpha\in [1,\I)$, as $r\rightarrow 0^+$.   
\end{thm}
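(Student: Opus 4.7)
The plan is to reduce the question to the asymptotic behavior of the single integral
\[
A_\alpha(r) = \int_0^1 \frac{dx}{r+x^\alpha},
\]
since by the even extension the travel time through $[-1,1]$ equals $2A_\alpha(r)$, exactly as in the separation-of-variables computation (\ref{eq:sov0})--(\ref{eq:sov3}). I would split the argument into the two regimes $\alpha \in (0,1)$ and $\alpha \in [1,\infty)$.

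For $\alpha \in (0,1)$, the strategy is to apply monotone convergence. For each fixed $x\in(0,1]$, the integrand $1/(r+x^\alpha)$ increases monotonically to $1/x^\alpha$ as $r \downarrow 0^+$, and $\int_0^1 x^{-\alpha}\,dx = 1/(1-\alpha) < \infty$ since $\alpha < 1$. Hence
\[
\lim_{r \to 0^+} A_\alpha(r) = \int_0^1 \frac{dx}{x^\alpha} = \frac{1}{1-\alpha},
\]
so $t(r) = 2A_\alpha(r) = O(1)$. This is the crucial point: the limiting integral is still convergent when the exponent is subcritical.

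For $\alpha \in [1,\infty)$, the cleanest route is the rescaling $y = x/r^{1/\alpha}$, which gives
\[
A_\alpha(r) = r^{1/\alpha - 1}\int_0^{r^{-1/\alpha}} \frac{dy}{1+y^\alpha}.
\]
When $\alpha > 1$, the improper integral $\int_0^\infty dy/(1+y^\alpha)$ converges to a positive constant $C_\alpha$, so $A_\alpha(r) \sim C_\alpha\, r^{-(\alpha-1)/\alpha} \to \infty$ as $r \to 0^+$. When $\alpha = 1$, direct evaluation yields $A_1(r) = \ln(1+1/r) \to \infty$, recovering the logarithmic example already computed in (\ref{eq:3ex}) and Table \ref{tab:3ex}. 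In both subcases $t(r) \to \infty$.

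There is no serious obstacle; the only thing worth being careful about is the interchange of limit and integral at $\alpha=1$, which is why it is cleanest to handle $\alpha=1$ separately by explicit integration rather than via the rescaling (whose prefactor $r^{1/\alpha - 1}$ degenerates to $1$ precisely at the transition). The rescaling argument moreover makes transparent \emph{why} $\alpha = 1$ is the threshold: it is the unique exponent for which $\int_0^\infty dy/(1+y^\alpha)$ fails to converge while the prefactor $r^{1/\alpha-1}$ becomes trivial, so the divergence is purely logarithmic and the ``quantitative bifurcation'' at $\alpha = 1$ separates the bounded regime from the unbounded one.
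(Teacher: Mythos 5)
Your proposal is correct, and for $\alpha\in(0,1)$ it is essentially identical to the paper's argument: the paper also invokes the monotone convergence theorem (along a decreasing sequence $r_n\downarrow 0$) to pass the limit inside and land on $\int_0^1 x^{-\alpha}\,dx$, which converges precisely for $\alpha<1$. Where you diverge from the paper is the case $\alpha\in[1,\infty)$: the paper handles it with the \emph{same} single MCT application, reading off that the limiting integral $\int_0^1 x^{-\alpha}\,dx$ is $+\infty$ for $\alpha\geq 1$ (MCT still applies when the limit integral is infinite), so no separate argument is needed. Your rescaling $y=x/r^{1/\alpha}$ is a genuinely different and somewhat stronger route for that regime: it yields not just divergence but the precise rates $A_\alpha(r)\sim C_\alpha\, r^{-(\alpha-1)/\alpha}$ for $\alpha>1$ and $A_1(r)=\ln(1+1/r)$ at the threshold, facts the paper only asserts afterwards as remarks (``an easy extension of Proposition \ref{prop:dgprop2}'' and the $f_2$ entry of Table \ref{tab:3ex}). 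The trade-off is uniformity versus precision: the paper's proof is one two-line argument covering all $\alpha$ at once, while yours requires the case split but explains \emph{why} $\alpha=1$ is the break-point and quantifies the blow-up. Both are valid; no gaps.
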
  

\begin{proof}
Let $r_n>0$ be a sequence such that $r_n>r_{n+1}$ for all $n$ and $r_n\rightarrow 0$ as $n\rightarrow \I$. Then consider 
\be
\label{eq:mct}
\lim_{n\rightarrow \I}\int_0^1 \frac{1}{r_n+x^\alpha}dx
\ee
Notice that $g_n(x)=\frac{1}{r_n+x^\alpha}$ is a monotonically increasing sequence of functions on $[0,1]$. Therefore we can apply the monotone convergence theorem in equation (\ref{eq:mct}) to obtain:
\be
\lim_{n\rightarrow \I}\int_0^1 \frac{1}{r_n+x^\alpha}dx=\int_0^1 \lim_{n\rightarrow \I} \frac{1}{r_n+x^\alpha}dx= \int_0^1 \frac{1}{x^\alpha}dx \nonumber
\ee
where the last integral converges for $\alpha\in(0,1)$ and diverges for $\alpha\geq 1$.
\end{proof}

The main observation of Theorem \ref{thm:qb} is that there is a major quantitative change in the behavior of the solutions at $\alpha=1$. In analogy with the classical terminology we refer to $\alpha=1$ as a ``quantitative bifurcation''. Notice that $\alpha=1$ lies on the boundary of $C^0$ and $C^1$ functions spaces for the family $f_\alpha(r,x)$.\\

It is an easy extension of Proposition \ref{prop:dgprop2} that for $\alpha>1$ the scaling laws follow powers of $r$, i.e. $t\sim r^{j}$ with $j\in (0,1)$. We have seen in Table \ref{tab:3ex} that for $\alpha=1$ the scaling is logarithmic. Hence the situation observed near $\alpha=1$, providing a transition from a power law to a constant via a logarithmic ``break-point'', resembles the situation of classical qualitative bifurcation theory in the context of a quantitative feature of a dynamical system.    

\section{A Model Problem}

Consider a pendulum as shown in Figure \ref{fig:pend}.\\

\begin{figure}[htbp]
\psfrag{zero}{$0$}
\psfrag{-pi2}{$-\frac{\pi}{2}$}
\psfrag{pi2}{$\frac\pi2$}
\psfrag{pi}{$\pi$}
\psfrag{L}{$l$}
\psfrag{g}{$g$}
\psfrag{t}{$\theta$}
\psfrag{m}{$m$}
	\centering
		\includegraphics[width=0.5\textwidth]{./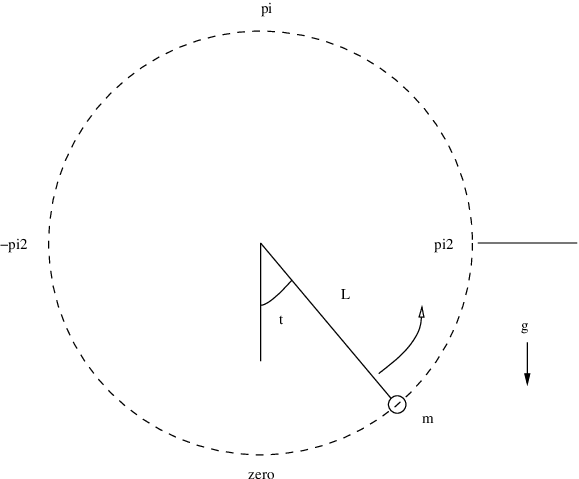}
	\caption{Illustration of the simple pendulum with a barrier at $\frac\pi2$ at distance 1 from the center.}
	\label{fig:pend}
\end{figure}

 We use the following notation: $l$ is the length of the pendulum, $m$ is its mass, $g$ is acceleration due to gravity, $\nu$ is a viscous damping coefficient, $\omega$ denotes constant forcing, $\theta$ is the angle and $I$ denotes the moment of inertia of the pendulum. Then Newton's law gives the equation of motion as:
\be
I\theta''+mgl \sin \theta =\nu (\omega-\theta')  \nonumber
\ee
If we consider the overdamped limit of very large damping we can neglect the term $I\theta''$ and consider the classical nonuniform oscillator
\be
\label{eq:main1}
\theta'=\omega-\frac{mgl}{\nu}\sin\theta
\ee
For simplicity let us set $L=mgl/\nu$, then (\ref{eq:main1}) reads:
\be
\theta'=\omega-L\sin\theta \nonumber
\ee 
Usually one assumes that the length of the pendulum is fixed and the bifurcation parameter is the applied torque. We generalize this approach and assume that we can vary the length $L$ depending on the angle of the pendulum, i.e. $L=L(\theta)$ with $L:S^1 \rightarrow \R^+$. Note that we regard $S^1$ as $[-\pi,\pi]$ with endpoints identified as indicated in Figure \ref{fig:pend}. If we assume that we want to vary the length symmetrically on $S^1$ there will be breakpoints for $L(\theta)$ at $\theta=\pi,\frac{\pi}{2},0,-\frac{\pi}{2}$. Furthermore let us assume that the length of the pendulum is limited at one of the breakpoints, say $L(\pi/2)=1$ (see Figure \ref{fig:pend}). Let us consider the family of functions:
\be
F_a(\theta)=\left\{\begin{array}{r l}
-1+(-2/\pi(\theta+\pi/2))^a  & \text{for $\theta\in[-\pi,-\frac\pi2)$} \\
-1+(2/\pi(\theta+\pi/2))^a  & \text{for $\theta\in[-\frac\pi2,0)$} \\
1-(-2/\pi(\theta-\pi/2))^a  & \text{for $\theta\in[0,\frac\pi2)$} \\
1-(2/\pi(\theta-\pi/2))^a  & \text{for $\theta\in[\frac\pi2,\pi]$}               
\end{array} \nonumber
\right.
\ee
for $a>0$. Illustrations for $a=\frac12,1,2$ are given in Figure \ref{fig:figure2}.\\
\begin{figure}[htbp]
	\centering
		\includegraphics[width=1.00\textwidth]{./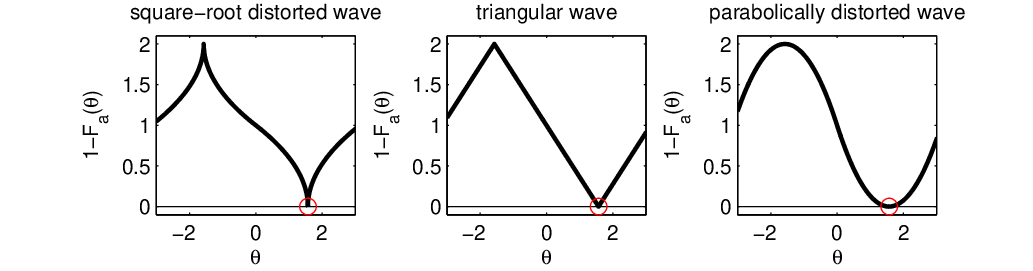}
	\caption{$f(\theta,1)=1+F_a(\theta)$ for $a=\frac12,1,2$}
	\label{fig:figure2}
\end{figure}

Now we consider the equation modeling the elongation $L$ as: 
\be
L(\theta)=\frac{F_a(\theta)}{\sin\theta} \nonumber
\ee
Note that this is a priori not defined for $a<1$ at $0$ giving infinite length $L$. We can simply truncate $L(\theta)$ if we want to construct the experiment but we remark that the form of $L(\theta)$ near $0$ is not relevant for the following discussion. It is crucial to notice that $L(\pi/2)=1$ independent of $a$, i.e. we pass the bottleneck with the same length. We are left with the equation 
\be
\label{eq:dnuo}
\dot{\theta}=\omega-F_a(\theta) \qquad \text{for $\theta\in[-\pi,\pi]$ and $a,\omega\in\R^+$}
\ee
As in the usual sinusoidal case $\dot{\theta}=\omega-\sin\theta$ we have a saddle-node bifurcation at $(\omega,\theta)=(1,\frac\pi2)$ for all $a>0$ in equation (\ref{eq:dnuo}) (see also Figure \ref{fig:figure2}). From Theorem \ref{thm:qb} we see that there exists a quantitative bifurcation for $a=1$. In particular, $a\in(0,1)$ gives scaling laws $t\sim O(1)$ as $\omega\rightarrow 1^+$. For the triangular wave $a=1$, we get a logarithmic scaling and for $a>1$ we obtain power laws $t\sim (\omega-1)^j$ for $j\in (0,1)$.\\

This means that upon tuning the parameter $a$ in the family $F_a$ and setting $\omega$ small and positive we can switch the behavior of the bottleneck. Hence equation (\ref{eq:dnuo}) describes how different strategies of varying $L$ affect the motion. The case $0<a<1$ corresponds to reducing a long pendulum to unit length for $\theta\rightarrow(\frac{\pi}{2})^-$, the case $a=1$ corresponds to a very small variation upon approaching $\frac{\pi}{2}$ and $a>1$ means that we start with a very short pendulum with a rapid increase near $\pi/2$ to reach unit length.

\section{Conclusions}

We have investigated scaling laws for saddle-node bifurcation in 1-dimensional dynamics and have carried out a \textit{systematic study} of different scaling laws if the different conditions of non-degeneracy and smoothness are modified. Dropping the assumptions of non-degeneracy we have seen that the nonlinearities of the parameter or the phase-space variable exhibit various power laws. Furthermore we have investigated the case of $C^0$ vector fields and demonstrated that parameter linearities lead to basically arbitrary scaling laws. For phase space nonlinearities we found a rapid change of the scaling law for a 2-parameter family of $C^0$ vector fields. In particular, the theory presented for $C^0$ vector fields can clearly be extended to more than 1 dimension, but in contrast to the smooth cases we do not have tools like center manifolds immediately available. As an example it is easy to construct an n-dimensional system, which has locally at least two directions near a saddle-node with different $C^0$ vector fields in each direction. This substantially complicates the analysis for the natural extension to more dimensions.\\

We have also demonstrated that a very simple pendulum equation can exhibit a rapid change in the scaling law. Many other applications of saddle-node bifurcations occurring in n-dimensional $C^0$ systems could clearly exhibit this phenomenon. Furthermore the different scaling laws found for $C^0$ vector fields can be directly related to the same scaling laws found in $C^0$ maps. $C^0$ maps are of particular relevance in the analysis of discontinuity-induced bifurcations and their associated return maps (Poincar\'{e} discontinuity map [PDM] and zero time discontinuity map [ZDM]) \cite{PSDS}. These return maps have different types of singularities, among them we can find piecewise-linear maps and maps with square-root singularities. Hence the methods we presented in this paper are likely to be very useful in the analysis of scaling laws for discontinuity-induced bifurcations.\\ 

\textit{Remark}: After finishing the calculations for this paper it was brought to the attention of the author that the detailed calculation of the integral required to prove Proposition \ref{prop:dgprop2} was carried out by Fontich and Sardanyes in \cite{Fontich}. Therefore we feel very confident in omitting this calculation. The example of the autocatalytic replicator model \cite{Fontich} serves as an excellent example for Proposition \ref{prop:dgprop2} which we have not supplemented with an example in our paper.
 
\bibliographystyle{plain}
\bibliography{../my_refs}

\end{document}